\newtheorem{thm}{Theorem}[section]
\newtheorem{prop}[thm]{Proposition}
\newtheorem{Ex}[thm]{Example}
\theoremstyle{definition}
\newtheorem{rem}[thm]{Remark}
\newtheorem{dfn}[thm]{Definition}
\title{Lower bounds on the number \\ of envy-free divisions}
\author[D. Joji\'{c}]{Du\v{s}ko Joji\'{c}}
\author[G. Panina]{Gaiane Panina}
\author[R. \v{Z}ivaljevi\'{c}]{Rade \v{Z}ivaljevi\'{c}}
\address[D. Joji\'{c}]{ Faculty of Science, University of Banja Luka}
\address[G. Panina]
{ St.\ Petersburg Department of Steklov Mathematical Institute}
\address[R. \v{Z}ivaljevi\'{c}]{Mathematical Institute of the Serbian Academy of Sciences and Arts (SASA), Belgrade}
\address{}
\subjclass{05E45, 52C99, 55M20, 91B32}
\keywords{Envy-free division, KKM theorem, chessboard complexes, configuration space/test map scheme}
\begin{document}

\date{April 26, 2025}

\begin{abstract}
We analyze lower bounds for the number of envy-free divisions, in the classical Woodall-Stormquist setting and in a non-classical case, when envy-freeness is combined with the equipartition of a measure.

 1. In the first scenario, there are $r$ hungry players, and the cake  (that is, the segment $[0,1]$) is cut into $r$ pieces. Then there exist at least two different envy-free divisions.  This bound is sharp: for each $r$, we present an example of preferences such that there are exactly  two envy-free divisions.

  2. In the second (hybrid) scenario, there are $p$ not necessarily hungry players ($p$ is a prime) and a continuous measure $\mu$ on $[0,1]$. The cake is cut into $2p-1$ pieces, the pieces are allocated to $p$ boxes (with some restrictions) and the players choose the boxes. Then there exists at least
  {$\binom{2p-1}{p-1} \cdot 2^{2-p}$ } envy-free divisions such that the measure $\mu$ is equidistributed among the players.

\end{abstract}

 \maketitle \setcounter{section}{0}

\section{Introduction}

Given some resource, identified with the unit segment $I=[0,1]$ (which is referred to as the \textit{cake}) and a set of players, one of the goals of
{welfare economics} is to divide the resource among them in an \emph{envy-free} manner.
Envy-freeness is the principle where every player feels that their share is at least as good as the share of any other player, and thus nobody feels envy.

\medskip

A \textit{cut} of the cake  is a sequence of numbers $x = (x_1,\dots, x_{r})$ where
\begin{equation}\label{eqn:cut}
    \sum _{i=1}^r x_i =1, \ \ x_i\geq 0,
\end{equation}
so the set of all cuts is naturally identified as the standard $(r-1)$-dimensional simplex $\Delta^{r-1}$.

The pieces of the cake arising from the cut (\ref{eqn:cut}) are the closed intervals (\textit{tiles})   whose lengths are $x_i$.

\medskip

 After a cut $x$ is chosen, each of the players expresses her individual \emph{preferences}, over the tiles arising from this particular cut, by pointing to one (or more) intervals which they like more than the rest.

  This idea is formalized in the following definition, where the individual preferences are interpreted as subsets of the simplex $\Delta^{r-1}$.

\begin{dfn}{\rm (\cite[Definition 2.2]{PaZi})}
The preferences of $r$ players is a matrix of subsets $\mathcal{A}=(A^j_i)_{i,j=1}^r$ of the standard simplex $\Delta^{r-1}$. The subsets are interpreted as preferences of the players in the following sense:
\begin{equation}\label{eqn:prefs-1}
x\in A^j_i \ \ \Leftrightarrow  \, \mbox{ {\rm in the cut} } x \mbox{ {\rm the player} } j \mbox{ {\rm prefers the tile} } i \, .
\end{equation}
\end{dfn}

\medskip
The classical KKM (Knaster-Kuratowski-Mazurkiewicz) assumptions on the preferences are the following:

\begin{itemize}
\item[$\mathbf{(P_{cl})}$] The preferences (the sets $A_i^j$) are \emph{closed}.
\item[$\mathbf{(P_{cov})}$] The preferences form a covering: $\bigcup_{i=1}^r A_i^j=\Delta^{r-1}, \ \ \forall j=1,...,r$. This means that whatever a cut is, each player  is expected to prefer at least one of the offered tiles.
    \item[$\mathbf{(P_{hung})}$] The players  are hungry: they never prefer degenerate tiles. That is,  $A_i^j$ does not intersect the facet of $\Delta^{r-1}$  defined by $x_i=0$.
\end{itemize}

\medskip
A \textit{division} of the interval $[0,1]$  is, by definition, a pair consisting of  a cut of the cake, and  an \textit{allocation}, that is, a bijection between  tiles and players.

\subsection{Envy-free division with either secretive or expelled players}


The history of these results (and their close relatives) can be summarized as follows.

\begin{enumerate}
\item[(a)]
D.R.\! Woodall proved (already in \cite{Wood}) a refined version of the envy-free-division-theorem with a \emph{secretive player}.
\item[(b)] Woodall's result was rediscovered by Asada et al.\ \cite{AsadaFrick17}, with a much simpler proof, under the name \emph{Strong colorful KKM theorem}.
\item[(c)] Meunier and Su, applying the method of multilabeled Sperner lemmas, offered \cite[Theorem 2.4]{MeSu} a unified point of view on envy-free division with secretive players, as well as the result where some of the players (unknown in advance) are ``voted off the group''.
\item[(d)] Closely related are the problems of ``envy-free division in the presence of a dragon'', studied in \cite{PaZi2022}, in a broader setting of non-hungry players.
\end{enumerate}

We shall need the following two theorems:
\begin{thm}\label{SecretiveThm}{\rm (Secretive player) \cite{Wood, AsadaFrick17, MeSu, PaZi2022}}
There are $r-1$ players and the cake is divided into $r$ pieces. If the preferences of players $\{A_i^j\}_{i\in [r]}, j\in [r-1]$,  satisfy classical KKM conditions,
then there exists a cut $x\in \Delta^{r-1}$ and $r$ bijections $\sigma_i : [r-1] \rightarrow [r]\setminus \{i\}, \, (i\in [r])$, such that for each $i\in [r]$,
\[
  x \in A^1_{\sigma_i(1)}\cap A^2_{\sigma_i(2)}\cap\dots\cap  A^{r-1}_{\sigma_i(r-1)} \, .
\]

In simple words, there exists a cut such that whatever tile is taken by the secretive player $r$, the rest of the tiles can be allocated in an envy-free manner.
\end{thm}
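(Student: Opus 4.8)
The plan is to reduce the statement to a purely combinatorial ``strong colorful Sperner lemma'' and then pass to the limit, following the classical KKM pattern. \emph{Discretization.} Fix a sequence $T_1, T_2, \dots$ of triangulations of $\Delta^{r-1}$ with $\operatorname{mesh}(T_n) \to 0$ (iterated barycentric subdivisions, say). For fixed $n$, a vertex $v$ of $T_n$, and a player $j \in [r-1]$, the covering condition $(\mathbf{P_{cov}})$ lets us choose a color $\ell_j(v) \in [r]$ with $v \in A^j_{\ell_j(v)}$; if $v$ lies on a facet $\{x_i = 0\}$, then the hungriness condition $(\mathbf{P_{hung}})$ forbids $v \in A^j_i$, so necessarily $\ell_j(v) \in \operatorname{supp}(v) := \{i : x_i(v) > 0\}$. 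Hence each $\ell_j$ is a Sperner (proper) labeling of $T_n$, and we obtain $r-1$ of them, one per player.

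\emph{Combinatorial core.} The crux is the following assertion, which is in essence the Sperner-lemma incarnation of the cited results (cf.\ the strong colorful KKM theorem of Asada et al.\ \cite{AsadaFrick17} and the multilabeled Sperner lemma of Meunier and Su \cite[Theorem 2.4]{MeSu}): for every triangulation $T$ of $\Delta^{r-1}$ and every choice of $r-1$ Sperner labelings $\ell_1, \dots, \ell_{r-1}$ of its vertices, there is a full-dimensional simplex $\tau$ of $T$ such that, writing $B_j := \ell_j(V(\tau)) \subseteq [r]$ for the set of colors player $j$ sees on the vertices of $\tau$, one has
\[
\Bigl|\Bigl(\bigcup_{j \in S} B_j\Bigr) \setminus \{i\}\Bigr| \ \geq \ |S| \qquad \text{for all } i \in [r] \text{ and all nonempty } S \subseteq [r-1].
\]
By Hall's marriage theorem this says precisely that, for each fixed $i$, the family $(B_j \setminus \{i\})_{j \in [r-1]}$ admits a system of distinct representatives; as it consists of $r-1$ sets contained in the $(r-1)$-element set $[r] \setminus \{i\}$, such a system is automatically a bijection $\sigma_i^{\tau} : [r-1] \to [r] \setminus \{i\}$ with $\sigma_i^{\tau}(j) \in B_j$ for every $j$.

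\emph{Passage to the limit.} Apply the combinatorial core to each $T_n$, obtaining a simplex $\tau_n$ and bijections $\sigma_i^{(n)}$, $i \in [r]$. Since there are only finitely many possibilities for the data $\bigl((\ell_j(V(\tau_n)))_{j \in [r-1]}, (\sigma_i^{(n)})_{i \in [r]}\bigr)$, after passing to a subsequence we may assume that the sets $B_j := \ell_j(V(\tau_n))$ and the bijections $\sigma_i := \sigma_i^{(n)}$ are independent of $n$; by compactness of $\Delta^{r-1}$ and $\operatorname{mesh}(T_n) \to 0$ we may further assume the simplices $\tau_n$ shrink to a single point $x \in \Delta^{r-1}$. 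For each player $j$ and each color $a \in B_j$, every $\tau_n$ has a vertex $v_n$ with $\ell_j(v_n) = a$, hence $v_n \in A^j_a$; as $v_n \to x$, the closedness condition $(\mathbf{P_{cl}})$ gives $x \in A^j_a$. Since $\sigma_i(j) \in B_j$ for all $i, j$, the cut $x$ together with $\sigma_1, \dots, \sigma_r$ satisfies the conclusion of Theorem~\ref{SecretiveThm}.

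\emph{The main obstacle.} Everything hinges on the combinatorial core. Ordinary Sperner's lemma handles a single labeling and produces a single fully rainbow simplex; here we must find one simplex that is simultaneously ``rich'' for all $r-1$ labelings and, moreover, robustly so, the richness surviving the removal of any one color $i$ (this robustness is the secretive-player phenomenon). Proving it requires the genuine topological input behind the cited works, e.g.\ a parity/degree argument on a suitably labeled auxiliary pseudomanifold in the spirit of Sperner, or the inductive multilabeled Sperner lemma of Meunier and Su. One could alternatively run the configuration space/test map scheme directly, encoding the ``bad'' cuts (those admitting no valid allocation for some forbidden tile $i$) via the topology of a chessboard complex of matchings between the $r-1$ players and the $r$ tiles; but the Sperner reduction above is the shortest route, and this particular theorem is in any case available from \cite{Wood, AsadaFrick17, MeSu, PaZi2022}.
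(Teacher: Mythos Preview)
The paper does not supply its own proof of Theorem~\ref{SecretiveThm}; it is quoted as a known result from \cite{Wood, AsadaFrick17, MeSu, PaZi2022} and then used as a black box (for instance in Proposition~\ref{thm:at-least-2}). So there is nothing in the paper to compare your argument against.

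That said, your sketch is a faithful outline of the standard route taken in the cited references: discretize to Sperner labelings, invoke a strong colorful / multilabeled Sperner lemma to find a single simplex whose label sets satisfy the Hall condition simultaneously for every forbidden color $i$, extract the bijections $\sigma_i$ by Hall's theorem, and pass to the limit using closedness and compactness. The discretization and limit steps are correctly handled. You are also right to flag the ``combinatorial core'' as the only nontrivial ingredient and to observe that you have not proved it here but merely stated it; your formulation of that core (the displayed Hall inequality for all $i$ and $S$) is exactly what is needed, and it is indeed the content of \cite[Theorem~2.4]{MeSu} and the strong colorful KKM theorem of \cite{AsadaFrick17}. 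As a self-contained proof the proposal therefore has a gap at precisely the point you yourself identify, but as a reduction to the cited literature it is accurate.
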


\begin{thm}\label{VotedOffThm}{\rm (Expelled player)}
There are $r+1$ players and the cake is divided into $r$ pieces. If the preferences of players $\{A_i^j\}_{i\in [r]}, j\in [r+1]$,  satisfy classical KKM conditions,
then there exists a cut $x\in \Delta^{r-1}$ and $r+1$ bijections $\tau_j : [r] \rightarrow [r+1]\setminus \{j\}, \, (j\in [r+1])$, such that for each $j\in [r+1]$,
\[
  x \in A_1^{\tau_j(1)}\cap A_2^{\tau_j(2)}\cap\dots\cap  A_r^{\tau_j(r)} \, .
\]

In simple words, there exists a cut such that, whoever is the expelled player, the   tiles can be allocated to the remaining players in an envy-free manner.
\end{thm}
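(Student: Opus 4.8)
The plan is to deduce Theorem~\ref{VotedOffThm} (the expelled player) from Theorem~\ref{SecretiveThm} (the secretive player) by a duality/transposition argument, exploiting the symmetry between the role of players and the role of tiles. First I would observe that the preference matrix $\mathcal{A}=(A^j_i)$ of Theorem~\ref{SecretiveThm} is indexed by $j\in[r-1]$ players and $i\in[r]$ tiles, while in Theorem~\ref{VotedOffThm} we have $j\in[r+1]$ players and $i\in[r]$ tiles; so the naive transpose does not have matching dimensions, and one cannot simply relabel. Instead I would look for a configuration-space/test-map reduction: the statement to be proved asserts the non-emptiness of a certain intersection indexed by the chessboard complex $\Delta_{r+1,r}$ (pairs $(j,i)$ with the $\tau_j$ giving matchings that avoid $j$), and the secretive-player theorem gives non-emptiness of the analogous intersection indexed by $\Delta_{r,r}$. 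The key is that both follow from the same topological input, namely a Borsuk--Ulam/KKM-type statement about $\mathbb{Z}/2$- or $S_{r}$-equivariant maps, so I would try to run the CS/TM scheme directly.

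Concretely, the steps I would carry out are: (1) Set up the test map. Given a cut $x\in\Delta^{r-1}$ and the covering conditions $(P_{cov})$, for each player $j\in[r+1]$ the set of tiles she prefers is nonempty and (by hungriness $(P_{hung})$) uses only tiles of positive length; encode the ``preferred tile'' data of all $r+1$ players over all cuts as a map from $\Delta^{r-1}$ (or a suitable subdivision / the deleted join) into a simplicial complex built from partial matchings between $[r+1]$ and $[r]$. (2) Identify this target complex as (a subcomplex of) the chessboard complex $\Delta_{r+1,r}$, and recall that $\Delta_{r+1,r}$ is known to be highly connected --- in fact $\Delta_{m,n}$ is $(\nu-1)$-connected with $\nu=\min\{m,n,\lfloor(m+n+1)/3\rfloor\}$, so $\Delta_{r+1,r}$ is $(r-1)$-connected, which is exactly the connectivity needed to match the dimension $r-1$ of the simplex of cuts. (3) Invoke the relevant topological Tverberg-/KKM-type lemma (the same one used in \cite{PaZi, PaZi2022} and by Asada et al.) to conclude that some cut $x$ maps to a \emph{complete} matching configuration, i.e.\ for that $x$ one can simultaneously, for every choice of expelled player $j$, assign the remaining $r$ players to the $r$ tiles via a bijection $\tau_j$ with $x\in A_i^{\tau_j(i)}$ for all $i$. (4) Check that the ``for each $j$'' clause really is delivered: the point of landing in the full chessboard complex (rather than a single matching) is precisely that all $r+1$ perfect matchings avoiding one vertex of the $[r+1]$-side coexist, which is the combinatorial content of the statement.

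An alternative, perhaps cleaner, route is a pure reduction: apply Theorem~\ref{SecretiveThm} not to $\mathcal{A}$ directly but to an auxiliary $r\times r$ preference matrix obtained by ``freezing'' one of the $r+1$ players. That is, for each fixed $k\in[r+1]$, delete player $k$ to get $r$ players and $r$ tiles; but Theorem~\ref{SecretiveThm} needs $r-1$ players, so instead one should delete two players, or rather treat one of the remaining players as the secretive one --- and here one must be careful that the \emph{same cut} $x$ works for all $r+1$ choices of $k$, which a bare application of the secretive-player theorem does not guarantee. This is why I expect the genuinely new work to be the topological step (2)--(3): one needs a single cut that is good for all expulsions simultaneously, so the argument cannot be a black-box reduction to Theorem~\ref{SecretiveThm} applied $r+1$ times; it must go through one application of an equivariant topological theorem to a target complex (the chessboard complex $\Delta_{r+1,r}$) whose connectivity is just high enough.

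The main obstacle, then, is establishing the connectivity bound for $\Delta_{r+1,r}$ at exactly the threshold $r-1$ and verifying that the test map is well-defined and equivariant with respect to the symmetry group acting on the $[r+1]$-side (so that a Borsuk--Ulam-type conclusion applies). I would expect the closedness hypothesis $(P_{cl})$ to be used to pass from an open-cover KKM statement to the closed one via a standard compactness/limit argument, and the hungriness $(P_{hung})$ to be used to keep the matchings supported on positive-length tiles so that the chessboard-complex model is the right one. If the connectivity estimate is unavailable at the needed level, the fallback is to thicken the construction (work with a barycentric subdivision or the $N$-fold deleted join) and quote the standard ``Sarkaria-type'' inequality relating the connectivity of the test space to the gain needed for the CS/TM scheme to close.
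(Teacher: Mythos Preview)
The paper does not give its own proof of Theorem~\ref{VotedOffThm}; it is stated as a known result, attributed in the introduction (item (c)) to Meunier and Su \cite{MeSu}, who prove it via multilabeled Sperner-type lemmas. So there is no ``paper's proof'' to compare against beyond that pointer to the literature.

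That said, your proposal has a genuine gap. The key step of your main route is the claim that $\Delta_{r+1,r}$ is $(r-1)$-connected, and this is false. The Bj\"orner--Lov\'asz--Vre\'cica--\v{Z}ivaljevi\'c bound gives that $\Delta_{m,n}$ is $(\nu-2)$-connected with $\nu=\min\{m,n,\lfloor (m+n+1)/3\rfloor\}$; for $m=r+1,\ n=r$ and $r\ge 2$ the minimum is $\lfloor (2r+2)/3\rfloor$, so the connectivity is only about $2r/3$, well below $r-1$. This bound is known to be sharp in many cases, so the obstruction is real, not just a weakness of the estimate. Your fallback (``thicken the construction \ldots\ Sarkaria-type inequality'') does not repair this: those techniques trade connectivity of a join against dimension, but here the deficit grows linearly in $r$.

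There is also a structural confusion in your CS/TM setup. You propose a map \emph{from} $\Delta^{r-1}$ \emph{into} the chessboard complex and then appeal to connectivity of the target; but connectivity of the target says nothing about where a map from a simplex must land. In the standard scheme (as used e.g.\ in \cite{PaZi, PaZi2022, vz11}) the chessboard complex sits on the \emph{domain} side, joined with the simplex of cuts, and one maps equivariantly to a representation sphere, concluding via a degree argument rather than a connectivity one. The actual proofs in the literature --- Meunier--Su's multilabeled Sperner lemma, or the degree approach of Asada et al.\ --- bypass the connectivity of $\Delta_{r+1,r}$ entirely; they work either by a parity/degree count on a fine triangulation of $\Delta^{r-1}$ with labels in $[r]\times[r+1]$, or by an equivariant map whose nonzero degree is established directly. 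Your ``alternative route'' paragraph correctly diagnoses why a black-box reduction to Theorem~\ref{SecretiveThm} fails, but the primary route you then commit to does not work as written.
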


\subsection{The central new results in the paper}

The problem of estimating the number of envy-free divisions doesn't seem to have received much attention in the existing literature.
Here we analyze two problems of this type, both fairly well-known and studied with the emphasis on the existence of a solution.

\medskip
By shifting the emphasis to the number of solutions here we determine:

\begin{enumerate}
\item A sharp lower bound for the number of envy-free division in the classical Woodall-Stormquist setting, i.e. for players with standard KKM preferences.

\item Lower bounds for combined envy-free division and equipartition of a measure, in the spirit of \cite{jpz-2}.
\end{enumerate}

\medskip\noindent
(1) An easy observation (Proposition \ref{thm:at-least-2}) states that each envy-free division problem in the Woodall-Stormquist setting has at least two distinct solutions. This
bound is sharp for each number of players, as shown by an example constructed in Theorem \ref{thm:not-more-than-2}.

In Section \ref{sec:extremal} we focus on preferences which are \emph{extremal} in the sense that they have precisely two distinct envy-free divisions.
We show (Proposition \ref{prop:exactly-one}) that extremal preferences have only one cut, allowing  two different envy-free allocations of pieces.

These two allocations define a bipartite graph on the sets of players and pieces. As shown in Theorem \ref{thm:extreme}, the only bipartite graph that arises in extremal preferences by this construction is a complete cycle (of size $2r$).

\bigskip\noindent
(2) In Section \ref{sec:simultaneous} we focus on a hybrid problem where the number of cuts (and tiles) is roughly twice as big as the number of players.
This allows us (Theorem \ref{thm:chessboard}) to achieve both an envy-free division among the players and an equipartition of a measure, which is prescribed in advance. As before, the emphasis is not only on the existence of a solution but includes an estimate of the number of simultaneous envy-free divisions and measure equipartitions.

It is interesting that in this case the lower bound  {$\binom{2p-1}{p-1} \cdot 2^{2-p}$} is much larger, illustrating different behaviour of relatively similar problems of envy-free division.

\section{The existence of two different envy-free divisions}

An easy corollary of the ``secretive player theorem'' (Theorem \ref{SecretiveThm}) says that, in the classical setting, there always exist at least two different envy-free divisions (Proposition \ref{thm:at-least-2}).
This result is in general the best possible, as shown by the example constructed in Theorem \ref{thm:not-more-than-2}.

\bigskip

\begin{prop}\label{thm:at-least-2}
  For the classical setting of hungry players, there are always at least two different envy-free divisions.
\end{prop}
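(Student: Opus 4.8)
The plan is to argue by contradiction, invoking the secretive‑player theorem (Theorem~\ref{SecretiveThm}) twice, with two different players cast in the role of the secretive one. So I assume that there is \emph{at most one} envy‑free division among the $r$ players; here $r\ge 2$ (for $r=1$ the unique cut and allocation give a single division, so the statement is understood for $r\ge 2$).

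First I would apply Theorem~\ref{SecretiveThm} to the submatrix $(A^j_i)_{i\in[r],\,j\in[r-1]}$ (which inherits the KKM conditions), i.e.\ with player $r$ secretive. This yields a cut $x\in\Delta^{r-1}$ and bijections $\sigma_i\colon[r-1]\to[r]\setminus\{i\}$ ($i\in[r]$) with $x\in\bigcap_{j=1}^{r-1}A^j_{\sigma_i(j)}$ for every $i$. By the covering condition $\mathbf{(P_{cov})}$ for player $r$ there is a tile $a$ with $x\in A^r_a$, and extending $\sigma_a$ by $r\mapsto a$ gives a bijection $\tau\colon[r]\to[r]$ with $x\in\bigcap_{j=1}^{r}A^j_{\tau(j)}$. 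Thus $(x,\tau)$ is an envy‑free division, and by hypothesis it is the unique one.

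The heart of the argument is to extract a \emph{second} envy‑free division from the whole list $\sigma_1,\dots,\sigma_r$, not just from $\sigma_a$. Since $r\ge 2$ we may fix $i\ne a$; as $\sigma_a$ and $\sigma_i$ have different images ($[r]\setminus\{a\}\ne[r]\setminus\{i\}$), they disagree at some index $j\in[r-1]$, so the single cut $x$ has the property that player $j$ prefers two distinct tiles there: $\tau(j)=\sigma_a(j)$ and $t:=\sigma_i(j)$. I would then apply Theorem~\ref{SecretiveThm} again, now with player $j$ secretive, obtaining a cut $y$ and bijections $\pi_m\colon[r]\setminus\{j\}\to[r]\setminus\{m\}$ with $y\in\bigcap_{\ell\ne j}A^\ell_{\pi_m(\ell)}$. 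Using $\mathbf{(P_{cov})}$ for player $j$ one again gets an envy‑free division, which by uniqueness must coincide with $(x,\tau)$; in particular $y=x$. But then, since $x=y\in\bigcap_{\ell\ne j}A^\ell_{\pi_t(\ell)}$ and $x\in A^j_t$, extending $\pi_t$ by $j\mapsto t$ yields an envy‑free division in which player $j$ receives the tile $t\ne\tau(j)$ --- a second envy‑free division, contradicting uniqueness. This contradiction proves the proposition.

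The step I expect to be the main obstacle is the one just described: one must realize that the second division need not live ``over the cut $x$ with player $r$ still secretive'' (player $r$ may prefer only the tile $a$ at $x$), and so one should instead use the full strength of the conclusion of Theorem~\ref{SecretiveThm} --- the entire compatible family $\{\sigma_i\}$ --- to pin down an \emph{active} player who is indifferent between two tiles at $x$, and then re‑run the theorem with that player secretive. The remaining points (checking that the extended partial bijections are genuine bijections, and that the two divisions produced really differ) are routine.
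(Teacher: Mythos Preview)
Your proof is correct and follows essentially the same route as the paper's: apply the secretive--player theorem once to produce a cut $x$ and (via uniqueness) force any further secretive--player cut to coincide with $x$; then observe that some non-secretive player is indifferent between two tiles at $x$, re-run the theorem with that player secretive, and read off a second envy-free allocation at $x$. The paper states the slightly stronger intermediate fact that \emph{every} player $2,\dots,r$ prefers at least two tiles at $x$ (since $\sigma_i(j)\neq i$ for all $i$ forces $\{\sigma_i(j)\}_i$ to contain at least two values), whereas you exhibit just one such player --- but that is all the argument needs.
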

\begin{proof}
  This follows directly from "secretive player" Theorem {\ref{SecretiveThm}}. Assume there is exactly one cut which gives an envy-free allocation, and prove that in this case this cut gives at least two envy-free allocations. Indeed, we may think that the first player is secretive. This means that each of the players $2,3,...,r$
  prefer at least two tiles that arise from this cut. Now let us think that player 2 is secretive. He may take any of the two tiles he prefers, and we
  have an allocation of the rest.
\end{proof}

\begin{Ex}\label{ex:exactly-two}{\rm  For $r=3$ there exist preferences that have exactly two different envy-free divisions, see Fig. \ref{fig:exactly-two}.}
\end{Ex}\label{ex:exactly-two}

\begin{figure}[htb]
\centering\vspace{+0cm}
\includegraphics[scale=0.35]{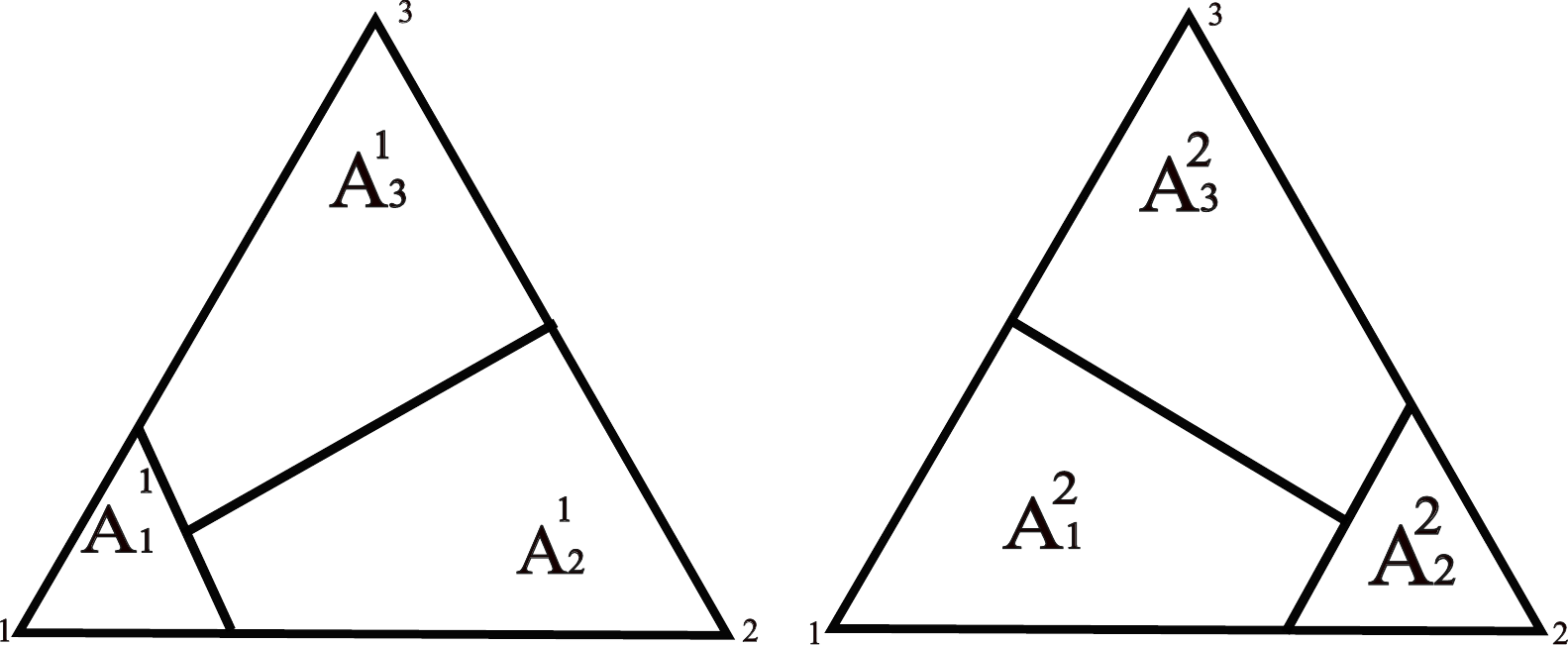}
\caption{{We depict the preferences of the first two players, the third ones are analogous.}}
\label{fig:exactly-two}
\end{figure}

Indeed, in the case of preferences exhibited in Figure \ref{fig:exactly-two}, the only common point (admissible cut) is the center of the triangle $O = (\frac{1}{3}, \frac{1}{3}, \frac{1}{3})$, while the corresponding envy-free divisions are:
\[
     A_1^1 \cap A_3^2 \cap A_2^3 = \{O\} =  A_3^1 \cap A_2^2 \cap A_1^3 \, .
\]

\medskip
The following theorem generalizes Example \ref{ex:exactly-two} to any number of players.

\begin{thm}\label{thm:not-more-than-2}
  For any $r$ there exist preferences that give exactly two different envy-free divisions.
\end{thm}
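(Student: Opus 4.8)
The plan is to generalize the $r=3$ picture in Figure \ref{fig:exactly-two} directly, constructing for each $r$ a system of preferences whose admissible cuts reduce to the single barycenter $O=(\tfrac1r,\dots,\tfrac1r)$ and at that cut there are exactly two envy-free allocations, realized by two ``cyclic'' bijections. Concretely, I would fix the two permutations $\sigma(i)=i+1 \pmod r$ and $\sigma'(i)=i-1\pmod r$ of $[r]$, and design the preference sets $A_i^j$ so that, for each player $j$, the set $A_i^j$ is a small ``wedge'' (the union of two closed regions) pointing at $O$: one region forcing $j$ to be matched to tile $i$ only along the direction dictated by $\sigma$, the other along $\sigma'$. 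The simplest way to get the combinatorics right is to let $A_i^j$ be the closed cone with apex $O$ spanned by the edge of $\Delta^{r-1}$ between the vertices $e_i$ and $e_{\sigma^{\pm 1}(j)^{-1}(i)}$-type description — but rather than fuss with explicit coordinates I would phrase it as: $A_i^j$ consists of all cuts $x$ for which player $j$ ``prefers'' tile $i$, where the preference rule is the explicit piecewise-linear rule that near $O$ reproduces exactly the two cyclic matchings and away from $O$ is an arbitrary closed hungry completion of the KKM covering.

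The key steps, in order, would be: (i) write down the explicit preference rule on a neighborhood of $O$ — I would use the rule that player $j$ prefers tile $i$ iff $x$ lies in one of two prescribed half-space intersections through $O$, chosen so that $\bigcap_j A^j_{\sigma(j)} = \bigcap_j A^j_{\sigma'(j)} = \{O\}$ and every other intersection $\bigcap_j A^j_{\tau(j)}$ over a bijection $\tau$ is empty; (ii) extend the rule to all of $\Delta^{r-1}$ so that $\mathbf{(P_{cl})}$, $\mathbf{(P_{cov})}$, $\mathbf{(P_{hung})}$ all hold — here one must be careful that the extension does not create new admissible cuts, which is arranged by making each $A_i^j$ collapse to a thin strip accumulating only at $O$ as one moves toward the facets, and by making the coverings $\bigcup_i A_i^j=\Delta^{r-1}$ hold for trivial reasons (a coarse partition of the simplex far from $O$ refined near $O$); (iii) verify that the only cut admitting some envy-free allocation is $O$, i.e.\ that $\bigcup_\tau \bigcap_j A^j_{\tau(j)} = \{O\}$; (iv) verify that at $O$ exactly the two cyclic bijections $\sigma,\sigma'$ (and no others) give $O\in\bigcap_j A^j_{\tau(j)}$, hence exactly two divisions.

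I would model the local picture on the $r=3$ case where the three preference sets of each player are three ``bowtie''/wedge regions meeting only at $O$, and the bijections $\sigma=(123)$ and $\sigma'=(132)$ are the two $3$-cycles; for general $r$ the analogue is to use the $r$-cycles $i\mapsto i\pm1$ and arrange each $A_i^j$ as the union of the two ``rays'' from $O$ that these two cycles dictate (thickened to full-dimensional closed hungry sets without changing the intersection pattern). The cleanest bookkeeping is to first do everything with $O$ replaced by a point and genuine rays, check the two-allocations-only property combinatorially (which cyclic shift sends $j$ to $i$), and then fatten.

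The main obstacle I expect is step (ii)–(iii): arranging the global extension so that the covering condition $\mathbf{(P_{cov})}$ and hungriness $\mathbf{(P_{hung})}$ are met while provably not introducing any further admissible cut beyond $O$. Fatten the wedges too much and new intersections $\bigcap_j A^j_{\tau(j)}$ appear; fatten too little and the $A^j_i$ fail to cover $\Delta^{r-1}$. The resolution is to make each $A^j_i$ be (a) a full-dimensional closed neighborhood of a ray from $O$ that is \emph{pinched} (its cross-section shrinks to zero) as it approaches the facet $x_i=0$, guaranteeing hungriness, and (b) for fixed $j$, chosen so that their union is all of $\Delta^{r-1}$ by declaring $A^j_i$ to additionally contain the entire ``sector'' of $\Delta^{r-1}$ closest to vertex $e_i$ away from $O$ — these far sectors meet pairwise only in a thin set disjoint from where a full bijection's intersection could live. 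Checking this last disjointness for all bijections $\tau$ simultaneously, uniformly in $r$, is the technical heart of the argument; everything else is a routine verification of the KKM axioms.
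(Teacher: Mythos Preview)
Your target is correct --- the paper also aims for the barycenter $O=(\tfrac1r,\dots,\tfrac1r)$ as the unique admissible cut, with exactly two envy-free allocations given by cyclic shifts --- but what you have written is a plan, not a proof. The construction is never actually specified: ``two prescribed half-space intersections through $O$, chosen so that $\bigcap_j A^j_{\sigma(j)}=\{O\}$ and every other intersection is empty'' is circular until you name the half-spaces. And you explicitly defer the hard step, writing that ``checking this last disjointness for all bijections $\tau$ simultaneously, uniformly in $r$, is the technical heart of the argument''. In a construction theorem the construction and its verification \emph{are} the proof; neither is present here. Your fatten/pinch/add-far-sectors scheme also has a real tension you have not resolved: if every player's $A_i^j$ contains the same ``sector near $e_i$'', then for many bijections $\tau$ the intersection $\bigcap_j A^j_{\tau(j)}$ contains the common boundary of those sectors, and you must show this boundary reduces to $\{O\}$ --- which is exactly the unexecuted ``technical heart''.

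The paper avoids this difficulty by a construction that is deliberately \emph{asymmetric} rather than a symmetric bowtie picture. For player $1$ it sets $A_1^1=\{x_1\ge 1-\varepsilon_1\}$, $A_2^1=\{x_1\le 1-\varepsilon_1,\ x_1+x_2\ge 1-\varepsilon_2\}$, and so on through $A_{r-2}^1$, using a descending sequence $1/r>\varepsilon_1>\dots>\varepsilon_{r-2}>0$; none of these $r-2$ slabs contains $O$. Only the last two sets, $A_{r-1}^1$ and $A_r^1$, cover the remaining region, split by $\{x_{r-1}\gtrless x_r\}$, and both contain $O$. The other players' preferences are obtained by cyclic shift. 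With these explicit formulas the KKM axioms are immediate, and uniqueness of the cut is a short Hall-type argument rather than a bijection-by-bijection check: if $x\neq O$, normalize cyclically so that $x_r<x_1=\dots=x_k>x_{k+1}$ with $x_1>1/r>\varepsilon_1$; then the partial-sum inequalities force each of players $2,3,\dots,k+2$ to prefer only tiles among $\{1,\dots,k\}$, which is $k+1$ players competing for $k$ tiles. This is the missing idea in your proposal: design the $r-2$ ``unwanted'' preference sets so that a single inequality ($\max_i x_i>\varepsilon_1$) disqualifies all of them at once, reducing the global verification to a one-line pigeonhole.
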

\begin{proof}
We construct a set (matrix) $(A_i^j)$ of preferences with exactly two envy-free divisions, which have one and the same cut of the cake $[0,1]$. In other words, the cut is the same but the tiles can be allocated in an envy-free fashion in two different ways.

 Choose a descending sequence ``small numbers'' $\{\varepsilon_i\}_{i=1}^{r-2}$, which satisfy the inequality
$$1/r>\varepsilon_1>\varepsilon_2>...>\varepsilon_{r-2}>0\, .$$

\medskip
First define the preferences  $A_i^1\subset \Delta^{r-1}$ of the first player:
\begin{align*}
& A_1^1   :=  \{x\in \Delta^{r-1} \mid x_1 \geqslant 1-\varepsilon_1\} \hbox{ which we shorten as } \{ x_1 \geqslant 1-\varepsilon_1\}\\
& A_2^1   :=  \{ x_1 \leqslant 1-\varepsilon_1,\, x_1 + x_2 \geqslant 1-\varepsilon_2 \} \\
& A_3^1   :=  \{ x_1 \leqslant 1-\varepsilon_1,\, x_1 + x_2 \leqslant 1-\varepsilon_2,\, x_1 + x_2 + x_3 \geqslant 1-\varepsilon_3 \} \\
&\dots \dots \dots \dots \dots \dots \dots \dots \dots \dots \dots \dots \dots \dots \dots \dots \dots \dots\\
& A_{r-2}^1   :=  \{ x_1 \leqslant 1-\varepsilon_1,\, \dots,\,  \sum_{i=1}^{r-3} x_i \leqslant 1-\varepsilon_{r-3},\, \sum_{i=1}^{r-2} x_i \geqslant 1-\varepsilon_{r-2} \} \\
&\hbox{At the last two steps we change the rule and set:}\\
& A_{r-1}^1   :=  \{ x\in \Delta^{r-1} \mid x_{r-1} \geqslant x_r \} \setminus Int \Big(\bigcup_{i=1}^{r-2} A_i^1\Big) \\
& A_{r}^1   :=  \{ x\in \Delta^{r-1} \mid x_{r} \geqslant x_{r-1} \} \setminus Int \Big(\bigcup_{i=1}^{r-2} A_i^1\Big)
\end{align*}

The preferences $A_i^j$ for $j\geq 2$ are defined by the substitution
\begin{equation}\label{eq:cyclic}
  A_i^1 \longrightarrow A_{i+j-1}^j, \quad x_i \rightarrow x_{i+j-1}
\end{equation}
or more explicitly,
\begin{align*}
& A_j^j   :=  \{ x_j \geqslant 1-\varepsilon_1\}\\
& A_{j+1}^j   :=  \{ x_j \leqslant 1-\varepsilon_1,\, x_j + x_{j+1} \geqslant 1-\varepsilon_2 \} \\
&\dots \dots \dots \dots \dots \dots \dots \dots \dots \dots \dots \dots \dots \dots \dots \dots \dots \dots\\
& A_{j+r-3}^j   :=  \{ x_j  \leqslant 1-\varepsilon_1,\, \dots,\,  \sum_{i=1}^{r-3} x_i \leqslant 1-\varepsilon_{r-3},\, \sum_{i=1}^{r-2} x_i \geqslant 1-\varepsilon_{r-2} \} \\
& A_{j+r-2}^j   :=  \{ x\in \Delta^{r-1} \mid x_{j+r-2} \geqslant x_{j+r-1} \} \setminus Int \Big(\bigcup_{i=1}^{r-2} A_{i+j-1}^j\Big) \\
& A_{j+r-1}^j   :=  \{ x\in \Delta^{r-1} \mid x_{j+r-1} \geqslant x_{j+r-2} \} \setminus Int \Big(\bigcup_{i=1}^{r-2} A_{i+j-1}^j\Big)
\end{align*}
The substitution (\ref{eq:cyclic}) means that there is an action of the cyclic group of order $r$ on the set of preferences. For this reason the indices in $A_i^j$ and $x_i$ are always understood modulo $r$.

\medskip
We begin the proof by showing that the only possible envy-free divisions are those where the corresponding  cut is the barycenter $b_0=(\frac 1 r,\frac 1 r,\ldots, \frac 1 r)$. Assuming the opposite, let $x=(x_1,x_2,\ldots,x_r)\in \Delta^{r-1}\setminus \{b_0\}$ be a different cut of the cake, which admits an envy-free division.

\medskip
Let $m = \max \{x_i\}$ and let $k_0$ be an index such that $x_{k_0-1}< m = x_{k_0}$. In light of the cyclic group action (\ref{eq:cyclic}), we can assume that $k_0=1$
and $x_{r}< m = x_{1}$. Also note that, by assumption, $m > \frac{1}{r} > \varepsilon_1$.

\medskip\noindent
\emph{Case 1.} This is a warming-up case which highlights the idea of the proof. Let us first  analyze the case $m=x_1 > x_2$.

\medskip From the viewpoint of the player $2$, none of the tiles $x_2, x_3, \dots, x_{r-1}$ is preferred. Indeed, this follows from the inequalities
\begin{align*}
  & x_2 < 1 - \varepsilon_1 \\
  & x_2 + x_3 < 1 - \varepsilon_2 \\
  & \dots \dots \\
  & x_2 + x_3 + \dots + x_{r-1} < 1 - \varepsilon_{r-2}
\end{align*}
which are an immediate consequence of $1 = x_1+x_2+\dots+ x_r$ and the assumption $m=x_1> \varepsilon_1$. It follows that the player 2 chooses one (or both) of the remaining tiles $x_1$ or $x_r$, as prescribed by preferences $A_r^2$ and $A_{r+1}^2 = A_1^2$. Finally, since $x_1> x_r$, the tile $x_1$ is the only preferred choice of player $2$.

\medskip
The situation with player $3$ is similar. In light of the inequalities
\begin{align*}
  & x_3 < 1 - \varepsilon_1 \\
  & x_3 + x_4 < 1 - \varepsilon_2 \\
  & \dots \dots \\
  & x_3 + x_4 + \dots + x_{r} < 1 - \varepsilon_{r-2}
\end{align*}
the player 3 may choose only { $x_2$ } or $x_1$, which means that $x_1$ is her only choice. But this is a contradiction, since we have two different players whose only preferred choice is the same tile $x_1$.

\medskip\noindent
\emph{Case 2.} Now turn to the general case, that is,  suppose that for some $k>1$
\[
x_r< x_1 = x_2 = \dots = x_k > x_{k+1} \, .
\]
Then by similar analysis as in \emph{Case 1}, we obtain that player 2 prefers only the tile $x_1$. The situation with player 3 is slightly different, she may now prefer either of the tiles $x_1$ or $x_2$. Similarly, the player $4$ prefers tiles $x_2$ and {  $x_3$} etc. This doesn't change until the player $k+1$ who may choose tiles $x_k$ and {  $x_{k-1}$}. Finally, the player $k+2$ may choose only the tile $x_k$.

The conclusion is that the players $\{2,3,\dots, k+1, k+2\}$ may only be matched with the tiles from the set $\{x_1, x_2, \dots , x_k\}$, which is a contradiction (violation of Hall's matching condition).
\end{proof}

\section{Extremal preferences}
\label{sec:extremal}

\begin{dfn}\label{dfn:extremal}
Preferences $\mathcal{A}=(A^j_i)_{i,j=1}^r$ are called extremal if they  admit exactly two envy-free divisions.
\end{dfn}

\begin{prop}\label{prop:exactly-one}
If $\mathcal{A}=(A^j_i)_{i,j=1}^r$ is an extremal collection of preferences then the following is satisfied:
\begin{enumerate}
\item There exists exactly one cut $x$ which allows an envy-free allocation. In other words the two envy-free divisions differ only in how the pieces of
the one and the same cut are allocated to the players.
\item For this unique cut, each player prefers at least two tiles.
\item For this unique cut, each tile is preferred by at least two players.
\end{enumerate}

\end{prop}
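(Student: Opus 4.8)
The plan is to exploit Theorem~\ref{SecretiveThm} by making each player secretive in turn, together with the bookkeeping remark that the number of envy-free divisions equals $\sum_x \#\{\text{perfect matchings of }G_x\}$, the sum being over all cuts $x$, where $G_x$ is the bipartite ``preference graph'' joining player $j$ to tile $i$ exactly when $x\in A_i^j$. First I would record the following \emph{key lemma}, obtained by applying Theorem~\ref{SecretiveThm} to the $r-1$ players $[r]\setminus\{\ell\}$ (the $r$ pieces unchanged): for every $\ell\in[r]$ there are a cut $w_\ell$ and bijections $\sigma_i\colon[r]\setminus\{\ell\}\to[r]\setminus\{i\}$ ($i\in[r]$) with $w_\ell\in A^j_{\sigma_i(j)}$ for all $i$ and all $j\ne\ell$. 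At such a cut: (i) there is an envy-free division --- let $\ell$ take any tile $i_0$ she prefers (one exists by $\mathbf{(P_{cov})}$) and complete the allocation by $\sigma_{i_0}$; (ii) every player $j\ne\ell$ prefers at least two tiles at $w_\ell$, since if $j$ preferred only $u$ then $\sigma_u(j)$ would be forced to equal $u\notin[r]\setminus\{u\}$; (iii) if moreover $\ell$ prefers two distinct tiles $a,b$ at $w_\ell$, then $(w_\ell;\,\ell\mapsto a,\ \sigma_a)$ and $(w_\ell;\,\ell\mapsto b,\ \sigma_b)$ are two distinct envy-free divisions supported on $w_\ell$.

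For part (1) I would argue by contradiction: suppose $\mathcal A$ is extremal but its two divisions sit on distinct cuts $x\ne y$, so each of $x$ and $y$ carries exactly one division. By (i), $w_\ell\in\{x,y\}$ for every $\ell$. If $w_\ell=w_{\ell'}$ for some $\ell\ne\ell'$, then (ii) with secretive player $\ell'$ shows $\ell$ prefers at least two tiles at $w_\ell$, so by (iii) there are two divisions supported on $w_\ell$; adding the division on the other of $x,y$ gives at least three divisions --- impossible. Hence $\ell\mapsto w_\ell$ is injective, which already forces $r\le 2$, so for $r\ge 3$ the two cuts coincide. For $r=2$ one has $\{w_1,w_2\}=\{x,y\}$, say $w_1=x$; then (ii) says player $2$ prefers both tiles at $x$, whence player $1$ prefers only one tile at $x$ (else $G_x\cong K_{2,2}$ has two perfect matchings), and symmetrically at $y$. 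Identifying $\Delta^1$ with $[0,1]$ via the length of the first tile, the set of cuts carrying no allocation equals the disjoint union $B_1\sqcup B_2$ of the open sets $B_i=\{\text{both players prefer only tile }i\}$, with complement $\{x,y\}$; hence each connected component of $[0,1]\setminus\{x,y\}$ lies in one $B_i$, and by $\mathbf{(P_{hung})}$ the endpoints $0,1$ (which carry no allocation) lie in $B_2$ and $B_1$ respectively. Inspecting which $B_i$ bounds the two components adjacent to $x$ (and to $y$) and using that the $A_i^j$ are closed $\mathbf{(P_{cl})}$ forces, at $x$ or at $y$, the player who there prefers only a single tile to in fact prefer both --- a contradiction; so the two cuts coincide in this case too (the case $r=1$ being vacuous). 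Call the common cut $x_0$.

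Parts (2) and (3) then follow quickly. For (2): given a player $j$, choose any $\ell\ne j$; the cut $w_\ell$ of the key lemma carries an envy-free division, hence $w_\ell=x_0$ by (1), and by (ii) player $j$ prefers at least two tiles at $x_0$. For (3): every tile $s$ is preferred by at least one player at $x_0$ (otherwise $G_{x_0}$ has no perfect matching, contradicting that $x_0$ carries a division); if $s$ were preferred by only one player $p$, make $p$ secretive, note its cut equals $x_0$ by (1), and pick any tile $i\ne s$ --- then $\sigma_i$ sends some player $q\ne p$ onto $s$, so $q$ too prefers $s$, a contradiction.

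The hard part is part (1): the injectivity/iterated-secretive argument settles only $r\ge 3$, and the case $r=2$ genuinely requires the separate topological argument above, built on the closedness of the $A_i^j$ and the connectedness of $[0,1]$. Everything else --- the key lemma and parts (2)--(3) --- is an essentially formal consequence of Theorem~\ref{SecretiveThm}.
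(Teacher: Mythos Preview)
Your proof is correct. For parts (1)--(2) you follow essentially the same route as the paper --- iterated applications of Theorem~\ref{SecretiveThm}, making each player secretive in turn --- though your injectivity argument ($\ell\mapsto w_\ell$ must be injective into $\{x,y\}$, forcing $r\le 2$) is cleaner and more explicit than the paper's somewhat compressed reasoning. You also supply a full topological argument for $r=2$, which the paper dismisses in one line as ``elementary''.

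The one genuine methodological difference is in part (3). The paper invokes Theorem~\ref{VotedOffThm} (the \emph{expelled player} theorem): it adds a duplicate player $r{+}1$ with the same preferences as some player $r{-}1$, applies the expelled-player result, and derives a contradiction by expelling $r{-}1$ so that the allegedly singly-preferred tile becomes unpreferred. Your argument stays entirely within Theorem~\ref{SecretiveThm}: having already established the unique cut $x_0$, you make the sole admirer $p$ of tile $s$ secretive, note that $w_p=x_0$, and observe that for any $i\ne s$ the bijection $\sigma_i$ must hit $s$ from some $q\ne p$. This is more economical --- it avoids introducing a second black-box theorem --- and makes the whole proposition rest on a single external input. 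The paper's approach, on the other hand, displays a pleasing duality (secretive for ``every player prefers two tiles'', expelled for ``every tile is preferred by two players''), but at the cost of importing an additional result.
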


\begin{proof}
The case of two players is elementary, so we assume  $r\geq 3$.

(1, 2)  Assume there are two cuts, $x$ and $x'$. In this case each of them necessarily has a unique envy-free allocation.  Take the player $i$ and make him secretive.
We conclude that for each of the two cuts the player $i$ prefers exactly one tile. Otherwise one has two different allocations for one of the cuts.
 Now let us eliminate player $1$ and replace him by a secretive player. There exists a cut which guaranties an envy-free allocation for any choice of the secretive player.
This cut is necessarily equal to one of  $x$ and $x'$; let it be $x$. Now the secretive player acts arbitrarily, so  for the cut $x$ each of the players prefers at least two tiles. A contradiction.

(3) Assume the tile $i$ is preferred by a unique player $r$. Consider the following setting: there are $r+1$ players, the extra player $r+1$ has the same preferences as the player $r-1$. The cake is divided in $r$ pieces. Then one of the players is expelled. Theorem \ref{VotedOffThm} states that there exists a cut which guarantees an envy-free division.  Assume the player $r+1$ is expelled, therefore the cut is unique, and it is $x$.  If the player $r-1$ is expelled,  the tile $i$ remains unpreferred by any of the players.
 A contradiction.
\end{proof}

\medskip

\begin{dfn}
  Let $(A_i^j)$ be an extreme collection of preferences.  Take the unique cut that gives an envy-free division.  The graph $\Gamma=\Gamma(A_i^j)$ is a bipartite graph with $r$ black vertices (they correspond to players) and  $r$ white vertices (they correspond to tiles).
  A black and a white vertices share an  edge iff the corresponding player prefers the corresponding tile.
\end{dfn}

\begin{thm}\label{thm:extreme}
  For an extreme collection of preferences $(A_i^j)$, the graph $\Gamma(A_i^j)$ is a cycle.
\end{thm}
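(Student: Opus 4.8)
\textbf{Proof proposal for Theorem \ref{thm:extreme}.}

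The plan is to combine the combinatorial constraints coming from Proposition \ref{prop:exactly-one} with the ``expelled player'' and ``secretive player'' theorems to pin down the structure of $\Gamma$. By Proposition \ref{prop:exactly-one}(2,3), every vertex of $\Gamma$ has degree at least $2$, so the total number of edges is at least $2r$. The key first step will be to establish the reverse inequality: $\Gamma$ has at most $2r$ edges, equivalently every player prefers \emph{exactly} two tiles and every tile is preferred by \emph{exactly} two players. To prove this I would argue by contradiction: if some player, say player $r$, prefers three or more tiles at the unique cut $x$, I would feed this extra freedom into Theorem \ref{SecretiveThm} (make player $r$ secretive among the remaining $r-1$ players) to manufacture a third envy-free allocation of the same cut $x$, contradicting extremality. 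One has to be a little careful, since making one player secretive only guarantees a cut, not that it is $x$; but since $x$ is the unique cut admitting an envy-free division (Proposition \ref{prop:exactly-one}(1)) and any cut produced by the secretive-player theorem does admit one, it must be $x$, and then the $\geq 3$ tiles preferred by player $r$ give at least $3$ bijections $\sigma$, hence at least three allocations. The same device with the ``expelled player'' theorem (adding a duplicate of some player as an $(r+1)$st player and expelling player $r$) handles the case where a tile is preferred by three or more players.

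Once we know $\Gamma$ is $2$-regular, the conclusion is nearly immediate: a finite bipartite graph in which every vertex has degree exactly $2$ is a disjoint union of even cycles. So it remains only to rule out the possibility that $\Gamma$ is disconnected, i.e.\ a union of two or more cycles. Here I would invoke the matching structure: the two distinct envy-free allocations are two perfect matchings $M_1, M_2$ of $\Gamma$, and their symmetric difference $M_1 \triangle M_2$ is a disjoint union of alternating cycles covering exactly the vertices on which $M_1$ and $M_2$ differ. If $\Gamma$ were a union of $c \geq 2$ disjoint cycles $C_1, \dots, C_c$, then on each cycle $C_t$ there are exactly two perfect matchings (the two ``parities'' of edges), and one could independently swap the matching on any nonempty subset of the $C_t$ while leaving the rest fixed; this would produce $2^c \geq 4$ distinct perfect matchings of $\Gamma$. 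I then need that each such perfect matching is realized as an envy-free allocation at the cut $x$ — but this is exactly the statement that $x \in A^j_i$ whenever $ij$ is an edge of $\Gamma$, which is the definition of $\Gamma$. Hence $2^c \geq 4$ envy-free divisions, contradicting extremality unless $c = 1$. Therefore $\Gamma$ is a single cycle, necessarily of length $2r$ since it is $2$-regular on $2r$ vertices.

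The main obstacle I anticipate is the first step — proving $2$-regularity, and in particular making the appeal to Theorems \ref{SecretiveThm} and \ref{VotedOffThm} produce genuinely \emph{distinct} divisions rather than the same one counted differently. The subtlety is that when player $r$ prefers tiles $\{a, b, c\}$, the secretive-player theorem gives bijections $\sigma_a, \sigma_b, \sigma_c : [r-1] \to [r] \setminus \{a\}, [r]\setminus\{b\}, [r]\setminus\{c\}$ respectively, and since these have different images they certainly yield different allocations of the full set $[r]$ of tiles to the $r$ players (with player $r$ receiving $a$, $b$, $c$ respectively); so distinctness is in fact automatic once one unwinds the definitions. A secondary point to handle cleanly is the degenerate small cases: the argument implicitly needs $r \geq 3$ (for $r=2$ a $2$-regular bipartite graph on $4$ vertices is already the $4$-cycle, so the statement is trivially true, but the secretive/expelled reductions degenerate), so I would dispose of $r \le 2$ separately at the start, exactly as in the proof of Proposition \ref{prop:exactly-one}.
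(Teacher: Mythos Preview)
Your proposal is correct and takes a genuinely different route from the paper's. The paper never establishes $2$-regularity of $\Gamma$ directly; instead it overlays the two envy-free allocations as \emph{red} and \emph{blue} perfect matchings, lets $\Gamma'$ be the subgraph of coloured edges, and argues purely inside the graph: $\Gamma'$ decomposes into alternating cycles and isolated red-blue edges, there is exactly one alternating cycle (otherwise one combines colours independently on two cycles to get $\geq 4$ matchings---the same $2^c$ count you use), a path-extension argument exploiting the degree $\geq 2$ bound rules out red-blue edges, and a final ``easy exercise'' rules out uncoloured chords of the resulting Hamiltonian cycle. Your approach instead front-loads the work by re-invoking Theorem~\ref{SecretiveThm}: making player $r$ secretive forces the secretive-player cut to coincide with $x$ (since it already admits an envy-free division once player $r$ picks any preferred tile), and then three preferred tiles for player $r$ yield three distinct allocations. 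Combined with Proposition~\ref{prop:exactly-one}(3) and an edge count this gives degree exactly $2$ everywhere, after which the cycle decomposition and the $2^c$ matching count finish in two lines. Your route is shorter and sidesteps the path/chord case analysis, at the cost of a second appeal to the secretive-player theorem; the paper's route is self-contained graph theory once Proposition~\ref{prop:exactly-one} is in hand. One small cleanup: your invocation of Theorem~\ref{VotedOffThm} for the tile side is vague as stated and in any case unnecessary---once every player has degree exactly~$2$ the total edge count is $2r$, and Proposition~\ref{prop:exactly-one}(3) then forces every tile to have degree exactly~$2$ as well.
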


\begin{proof}
  Each envy-free division is a perfect matching on $\Gamma$. So there are
  exactly two perfect matchings, call them\textit{ red} and\textit{ blue}. After taking their union, one imagines that some of the edges are colored red, some are colored blue, some are red-blue, and some remain uncolored.

  Eliminating all uncolored edges yields a graph $\Gamma'$ which splits into connected components. There are two possible types of connected components:
  (1) cycles with alternating red and blue edges, and (2)  one-edge graphs  that correspond to red-blue edges.

  \medskip

 Two  observations are:
 \begin{enumerate}
   \item $\Gamma'$ has at least one cycle. Otherwise the red and blue perfect matchings coincide.

   \item  If  $\Gamma'$ has more than one cycle, $\Gamma'$  supports at least $4$ perfect matchings, since for two cycles one can combine ''red-red'',  ''red-blue'', ''blue-red'' and ''blue-blue''  matchings on thecycles.
 \end{enumerate}

     So $\Gamma'$ has exactly one cycle; we call it the \textit{alternating cycle}. Let us convince ourselves that $\Gamma'$ has no red-blue edges. Assume there is a  red-blue edge $e$. Since each of the vertices of $\Gamma$ has valency at least $2$, the edge $e$ participates in a path  $P$ in the graph $\Gamma$ such that in the path uncolored edges alternate with red-blue ones. There are two options:\begin{enumerate}
                                            \item The path $P$ closes to a cycle. Then the cycle admits yet another perfect matching. A contradiction.
                                            \item The path $P$ terminates by its two endpoints at the alternating cycle. Then it is  possible to create a new perfect matching  on their union, see Figure \ref{fig:red-and-blue}.
                                          \end{enumerate}
 It is an easy exercise  to prove that $\Gamma=\Gamma'$, that is, we have one alternating cycle and no more other (uncolored) edges.
\end{proof}

\begin{figure}[htb]
\centering\vspace{+0cm}
\includegraphics[scale=0.35]{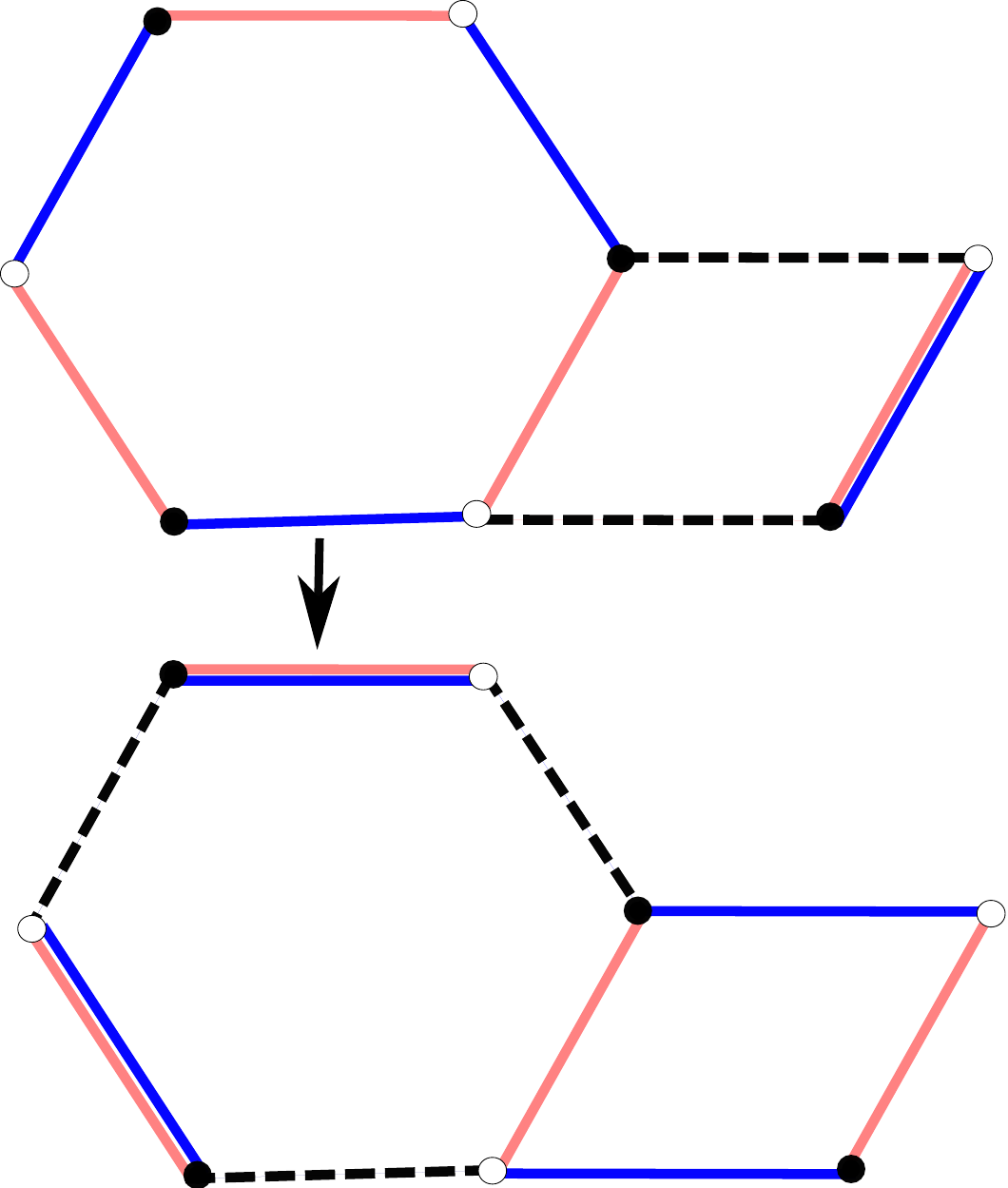}
\caption{{Generating new perfect matchings from old.}}
\label{fig:red-and-blue}
\end{figure}

\section{Simultaneous envy-free division and a measure equipartition}\label{sec:simultaneous}

In this section we assume that the number of players $p$ is a prime. Let $\mu$ be a continuous measure on  the segment (cake) $[0,1]$.
We aim not only at an envy-free division, but also at an equipartition of the measure. Clearly, $p-1$ cuts are insufficient, so
 we propose the following scenario:  the cake is cut into $2p-1$ tiles that are allocated to $p$ boxes, with some restrictions on the number of tiles in a box. The players express their preferences over the boxes (by inspection of the tiles inside the boxes).

\medskip
These are the so called \emph{new-style preferences}  $(B_i^j)$, as described in  \cite[Section 3.2]{PaZi}. The tiles are allocated to (labelled) boxes (by an allocation function $\alpha : [p] \rightarrow [p]$) and the preferences are expressed by the rule:
\begin{equation}\label{eqn:prefs-2}
\begin{split}
(x, \alpha) \in B^j_i \ \ \Leftrightarrow  \, &  \mbox{ {\rm in the cut} } x \mbox{ {\rm and the allocation} }
\alpha \mbox{ {\rm the player} } j\\
& \mbox{ {\rm prefers the content $\alpha^{-1}(i)$ of the box } } i \, .
\end{split}
\end{equation}

We refer the reader for a detailed exposition in \cite{PaZi}, briefly pinpointing important details:
\begin{enumerate}
\item The preferences $(B_i^j)$  are now subsets of  the configuration space $\mathcal{C}$ (described below).
\item In addition of being closed and covering, the preferences are also assumed to be \textit{equivariant}. In simple words, a player does not care about the numbering of the boxes, his preferences depend only on what is contained inside.
\item We do not assume that the players are hungry.
\item  It is assumed that degenerate tiles (if any) do not affect the preferences.
\end{enumerate}

\begin{dfn}
We say that a division in the above setting is  \textit{favourable} if
 \begin{enumerate}
 \item It is envy-free (every player gets a preferred box).
 \item The measure $\mu$ is equidistributed among the players
      \item Each player gets at most three tiles.
      \item The number of players who get three tiles is at most one.
    \end{enumerate}
\end{dfn}

\begin{rem}{\rm
 A favourable division of the cake may not exist if the cake is cut into $2p-2$ pieces.
As a consequence,  if the cake is cut into $2p-1$ pieces, and a favourable division exists, it never contains degenerate tiles.
}
\end{rem}
Here is a simple example (in the case $p=3$ and $2p-2=4$), illustrating this phenomenon. Suppose that
\[
  a_1< b_1 < a_2 < b_2 <a_3 < b_3 < c < d \, .
\]
The preference of player $P_i \, (i=1,2,3)$, is to maximize the measure of the interval $[a_i, b_i]$. The measure $\mu$, which is supposed to be equidistributed among the players, is the uniform measure on the interval $[c,d]$. This is possible only if the number of pieces is at least $2p-1=5$.

  \medskip

  \begin{thm}\label{thm:chessboard}  Let $p$ be a prime number and the preferences $B_i^j$ are closed and covering. Let $\mu$ be a continuous measure such that  there does not exist a favourable division of  $I=[0,1]$ with $2p-2$ tiles.
    Then there exist at least   {$\binom{2p-1}{p-1} \cdot{2^{2-p}}$} favourable divisions with $2p-1$ tiles.
  \end{thm}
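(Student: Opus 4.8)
The natural framework is the configuration space/test map scheme, as signalled by the keywords and the reference to \cite{PaZi, jpz-2}. The plan is to build a topological space that simultaneously encodes (i) cuts of $[0,1]$ into $2p-1$ tiles, (ii) allocations of the tiles into $p$ boxes with the prescribed cardinality restrictions (each box at most three tiles, at most one box with exactly three), and (iii) the requirement that the measure $\mu$ is equidistributed. The measure equipartition constraint cuts the space of cuts down: since $\mu$ is continuous, for any grouping of the $2p-1$ tiles into $p$ groups of the allowed sizes there is (generically) a unique way to place the cuts so that each group carries mass $1/p$; varying the grouping combinatorially, the relevant configuration space $\mathcal{C}$ becomes a chessboard-type complex — concretely, a complex built from the $p$-fold deleted join of simplices whose cells are indexed by the $\binom{2p-1}{p-1}$-type data of distributing $2p-1$ labelled tiles into $p$ boxes under the size constraint. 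This is why the bound $\binom{2p-1}{p-1}\cdot 2^{2-p}$ appears: it is (up to the power of $2$) the number of top cells, i.e.\ a count of the combinatorial types of favourable allocations.

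**Key steps.** First I would make precise the configuration space $\mathcal{C}$: a $\mathbb{Z}/p$-space (the action permuting boxes cyclically) whose points are (cut, allocation) data with $\mu$ already equidistributed, and check it has the right connectivity/homology to support a $\mathbb{Z}/p$-equivariant cohomological lower-bound argument. Second, the test map: the covering, closed, equivariant preferences $B_i^j$ define, via the KKM/Sperner mechanism, an equivariant map from $\mathcal{C}$ (or a suitable subdivision) to a representation sphere, and a favourable division corresponds to a zero of this map. Third — and this is the crux — I would count zeros rather than merely detect one: the hypothesis that \emph{no} favourable division exists on $2p-2$ tiles serves as the boundary/nondegeneracy condition ensuring the test map is nonzero on a distinguished subcomplex (the locus of degenerate tiles), so that a relative equivariant obstruction class is well defined and its evaluation against the fundamental class counts solutions with multiplicity. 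The arithmetic then reduces to computing that the relevant equivariant Euler class / ideal-valued index of $\mathcal{C}$ rel the degenerate locus equals $\binom{2p-1}{p-1}\cdot 2^{2-p}$, which should follow from a cell count on the chessboard complex together with the standard averaging: orbits under $\mathbb{Z}/p$ have size $p$ except possibly degenerate ones, contributing the division, while the $2^{2-p}$ records a binary choice (left/right placement, or red/blue as in Theorem \ref{thm:extreme}) per player that does not get collapsed.

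**Main obstacle.** The hard part will be step three: passing from an \emph{existence} result (one zero of the equivariant test map, classical KKM-style) to a \emph{counting} result with the exact constant $\binom{2p-1}{p-1}\cdot 2^{2-p}$. This requires (a) identifying the correct relative equivariant cohomology group and showing the obstruction class is a nonzero multiple of a generator with the claimed coefficient, and (b) a clean combinatorial identification of the top cells of the chessboard-type complex $\mathcal{C}$ with favourable allocation patterns, including verifying that the ``at most one box with three tiles'' condition is exactly what makes the cell structure compatible with the $\mathbb{Z}/p$-action so that a fixed-point-free quotient argument divides the cell count by an appropriate power of $p$ and $2$. I would expect the continuity of $\mu$ and the non-existence hypothesis for $2p-2$ tiles to be used precisely to guarantee that the configuration space has no ``extra'' boundary strata and that all solutions are non-degenerate, so the count is exact rather than merely a lower bound modulo $p$; turning the mod-$p$ Euler-class bound into the stated integer lower bound is the delicate final accounting.
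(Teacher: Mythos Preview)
Your framework (configuration space/test map, chessboard complex, $\mathbb{Z}/p$-equivariance) matches the paper, but the mechanism you propose for extracting the \emph{count} is not the one used, and your reading of where the constants $\binom{2p-1}{p-1}$ and $2^{2-p}$ come from is off.

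The paper does \emph{not} compute an equivariant Euler class or obstruction class and read off a multiplicity. Instead it runs an elementary double count. One fixes a ``coloring'' of the $2p-1$ columns of the chessboard into $p-1$ red, $p-1$ blue, and $1$ white; this specifies the configuration space $\Delta_{p,p-1}\ast\Delta_{p,p-1}\ast\Delta_{p,1}$ and, via the standard nonzero-degree argument plus a secretive-player step, yields at least \emph{two} favourable divisions compatible with that coloring (each box gets tiles of distinct colors). Now one varies the coloring: there are $\binom{2p-1}{p-1}\cdot p$ choices, each contributing at least two solutions. Finally one bounds from above how many colorings can produce a given favourable division: a division with no three-tile box is compatible with $p\cdot 2^{p-1}$ colorings, one with a three-tile box with only $3\cdot 2^{p-1}$. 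Dividing $2\binom{2p-1}{p-1}\cdot p$ by the larger overcount $p\cdot 2^{p-1}$ gives the bound. So $\binom{2p-1}{p-1}$ is the number of \emph{colorings} (choices of red columns), not a top-cell count of a single complex, and $2^{2-p}$ is $2/2^{p-1}$: the secretive-player factor $2$ over the per-box binary red/blue ambiguity in the overcount.

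Your proposed route---counting zeros directly via a relative equivariant cohomology class---is not obviously wrong, but it is a much harder program and you have no concrete computation leading to the stated constant; as you yourself flag, step three is entirely open. The paper sidesteps that difficulty: it never counts zeros of a single test map, it counts \emph{test maps} (one per coloring), uses only existence-plus-one for each, and handles multiplicity combinatorially. The $2p-2$ hypothesis is used only to guarantee that favourable divisions with $2p-1$ tiles have no degenerate tiles, not as a boundary condition for an obstruction class.
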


\begin{proof} The proof follows the usual \emph{Configuration space/Test map}-scheme \cite{Z17}, modeled on examples described in \cite{PaZi}.

\medskip
Consider the following configuration space $\mathcal{C}$ (a generalized chessboard complex) on the $[p]\times [2p-1]$ chessboard. The vertices of
$\mathcal{C}$ are the elements of the chessboard $[p]\times [2p-1]$, while the simplices correspond to the rook placements which have at most one rook in each of $2p-1$ columns. In addition to that, in each row of the chessboard a simplex may have at most one rook in columns $1,2,\dots, p-1$ and at most one rook in columns $p,p+1,\dots, 2p-2$. (As a consequence in each row there are at most three rooks.)

\medskip
Informally speaking, the first $p-1$ columns of the chessboard are colored in red, the second $p-1$ columns are blue, and the last column is white. So the original chessboard
$[p]\times [2p-1]$ decomposes into three chessboards, positioned side by side, and for each of them we obtain the corresponding chessboard complex.

\medskip
From this description it easily follows that $\mathcal{C}$ is a join of standard chessboard complexes \cite{PaZi}
\begin{equation}\label{eq:coloring}
           \mathcal{C} \cong \Delta_{p,p-1} \ast \Delta_{p,p-1} \ast \Delta_{p,1}
\end{equation}
where the vertices of  $\Delta_{m,n}$ are elements of $[m]\times [n]$, and simplices correspond to non-attacking arrangements of rooks in the chessboard $[m]\times [n]$.

{The columns of the chessboard correspond to tiles of the cake, so we imagine that given a cut, the first $p-1$  tiles are red, the second $p-1$ tiles are blue, and the last tile is white. }

\medskip

The associated test map
\begin{equation}\label{eq:test}
F = (f_1, f_2) : \Delta_{p,p-1} \ast \Delta_{p,p-1} \ast \Delta_{p,1} \rightarrow W_1 \oplus W_2
\end{equation}
has two components,
\[
   f_1 : \mathcal{C} \rightarrow W_1 \mbox{ {and} } f_2 : \mathcal{C} \rightarrow W_2
\]
where $W_1 \cong W_2 \cong \mathbb{R}^p/ D$ is the $(p-1)$-dimensional $\mathbb{Z}_p$-representation, obtained by subtracting the diagonal $D$ from  $\mathbb{R}^p$.
The component $f_1$ accounts for the envy-free division, while $f_2$ tests for an equipartition of $\mu$\footnote{{The functions $f_1$ and $f_2$ arise in a routine way, in \textit{envy-free and fair necklace splitting theorems}, see  \cite{jpz-2}. In short, $f_1$ records the values of the measure $\mu$ of the boxes, and $f_2$ comes from partition of unity related to the prefrences  }.}

\medskip
The test map (\ref{eq:test}) must have a zero, as a consequence of the fact (see  \cite{vz11}) that the degree of the following ($\mathbb{Z}_p$-equivariant) map
\[
\Delta_{p,p-1} \ast \Delta_{p,p-1} \rightarrow S(W_1 \oplus W_2) \cong S(W_1) \ast S(W_2)
\]
is non-zero.   {By construction, each zero of the test map corresponds to a favourable division with additional restrictions:}

{(7) \ \ \ \ \ \ \textit{each player gets a box with all tiles of different colors.}}

   \bigskip
   
    {Secretive player theorem holds true: there exists a cut and an allocation to boxes, such that whatever box is taken by the secretive player $p$, the rest of the boxes can be allocated to the other players, and the division is favourable  and satisfies (7). The proof of this fact is routine; it uses either Birkhoff polytope or Hall marriage lemma. We refer the reader to \cite{PaZi2022} for details of a proof for a different configuration space. Therefore there exist at least two favourable divisions with  additional restriction (7).}

 \bigskip
    Now the idea is  to look at a similar (and isomorphic)  configuration space, obtained by a different choice of $(p-1)$ red  and $(p-1)$ blue columns in the chessboard $[p]\times [2p-1]$. Red (blue) columns correspond to the first (second) copy of $\Delta_{p,p-1}$ in (\ref{eq:coloring}).  Each way of coloring gives  { at least two} favourable division.


   There are $$\binom{2p-1}{p-1}\cdot p$$
   ways to choose $p-1$  red columns  and one white.
   The remaining $p-1$ columns are blue.

   Note that each coloring produces at least two favourable division. However, each favourable division arises with different colorings.

   There are two cases:\begin{enumerate}
                         \item A favourable division has no box with three tiles. Therefore each box (except for one box) contains  two tiles, and a unique box contains one tile. The number of colorings is
                             $$2^{p-1}+(p-1)2^{p-1}=p2^{p-1}.$$
                         \item There is a box with three tiles, $p-3$ boxes  with two tiles, and $2$ boxes with one tile.
                         The number of colorings is $$3\cdot2^{p-1}.$$
                       \end{enumerate}

   We conclude that for $p\geq 3$ the number of favourable divisions  is at least  {$$\frac{2\binom{2p-1}{p-1}\cdot p}{p\cdot 2^{p-1}}=\frac{\binom{2p-1}{p-1} }{2^{p-2}}.$$}

  \end{proof}

\subsection*{Acknowledgements} The authors are very grateful to CIRM, Luminy, where this research was initiated in
the framework of a ``Research in Pairs'' program, in March 2025. Section 4  is supported by the Russian Science Foundation (project 25-11-00058).

\end{document}